\providecommand{\U}[1]{\protect\rule{.1in}{.1in}}
\newcommand{\Keywords}{\par \noindent\textbf{Keywords:}}
\numberwithin{equation}{section}
\numberwithin{equation}{section}
\newtheorem{thm}{Theorem}[section]
\newtheorem{lem}[thm]{Lemma}
\newtheorem{rmk}[thm]{Remark}
\newtheorem{define}[thm]{Definition}
\newtheorem*{lem*}{Lemma}
\newtheorem*{thm*}{Theorem}
\newcommand*{\RMN}[1]{\uppercase\expandafter{\romannumeral#1}}
\def\<{\langle}
\def\>{\rangle}
\def\d{{\rm d}}
\def\div{{\rm div \,}}
\def\curl{{\rm curl \,}}
\def\E{\mathbb{E}}
\def\P{\mathbb{P}}
\def\R{\mathbb{R}}
\title{On the pathwise uniqueness of stochastic 2D Euler equations with Kraichnan noise and $L^p$-data}
\author{Shuaijie Jiao\footnotemark[1] \quad Dejun Luo\footnotemark[2]}
\begin{document}
	\maketitle
	
	\vspace{-20pt}
	\renewcommand{\thefootnote}{\fnsymbol{footnote}}
	\footnotetext[1]{Email: jiaoshuaijie@amss.ac.cn. School of Mathematical Sciences, University of Chinese Academy of Sciences, Beijing 100049, China, and Academy of Mathematics and Systems Science, Chinese Academy of Sciences, Beijing 100190, China}
	
	\footnotetext[2]{Email: luodj@amss.ac.cn. Key Laboratory of RCSDS, Academy of Mathematics and Systems Science, Chinese Academy of Sciences, Beijing 100190, China, and School of Mathematical Sciences, University of Chinese Academy of Sciences, Beijing 100049, China}

\begin{abstract}
	In the recent work [arXiv:2308.03216], Coghi and Maurelli proved pathwise uniqueness of solutions to the vorticity form of stochastic 2D Euler equation, with Kraichnan transport noise and initial data in $L^1\cap L^p$ for $p>3/2$. The aim of this note is to remove the constraint on $p$, showing that pathwise uniqueness holds for all $L^1\cap L^p$ initial data with arbitrary $p>1$.
\end{abstract}
	\Keywords{ Pathwise uniqueness, 2D Euler equation, Kraichnan noise, regularization by noise}

\section{Introduction}

We consider the vorticity form of stochastic 2D Euler equation perturbed by transport noise of Kraichnan type on $[0,T]\times \R^2$, namely
\begin{equation}\label{Euler}
	\left\{\aligned
	& \d \omega + u\cdot \nabla\omega\,\d t+ \circ\d W\cdot \nabla\omega =0,\\
	& u= \nabla^\perp (-\Delta)^{-1} \omega,
	\endaligned \right.
\end{equation}
where $\nabla^\perp=(\partial_2, -\partial_1)$, $\circ\d$ stands for the Stratonovich stochastic differential and the Kraichnan noise $W$ is a special space-time noise which is white in time, colored and divergence free in space. Specifically, the Fourier transform of its covariance matrix $Q(x-y)=Q(x,y):=\E[W(1,x) \otimes W(1,y)]$ is
\begin{equation}\label{Q^}
	\widehat{Q}(n)=\langle n\rangle^{-(2+2\alpha)}\bigg(I_2-\dfrac{n\otimes n}{\lvert n\rvert^2 }\bigg),\quad n\in \mathbb{R}^2,
\end{equation}
where $\langle n\rangle:=(1+\vert n\vert^2)^{1/2},\alpha\in (0,1)$ and $I_2$ is the $2\times 2$ identity matrix. It is worthy to mention that the noise can be represented as $W(t,x)=\sum_{k}\sigma_k(x)W_t^k$, where $\{\sigma_k\}$ is a family of divergence free vector fields and $\{W^k\}$ is a sequence of independent Brownian motion, see \cite[Section 2.1]{GalLuo23}. Since $Q(0)= \sum_k \sigma_k(x)\otimes \sigma_k(x) = \frac\pi{2\alpha} I_2$ (cf. \cite[(2.14)]{CogMau}), equation \eqref{Euler} can be formally written in the It\^o form as
\begin{equation}\label{Ito-Euler}
	\left\{\aligned
	& \d \omega + u\cdot \nabla\omega\,\d t+ \sum\limits_{k}\sigma_k\cdot\nabla\omega\,\d W^k  =\frac{\pi}{4\alpha}\Delta \omega\, \d t,\\
	& u= \nabla^\perp (-\Delta)^{-1} \omega.
	\endaligned \right.
\end{equation}

For deterministic 2D incompressible Euler equation, Yudovich established in the celebrated work \cite{Yudovich63} well-posedness of weak solutions for initial vorticity $\omega_0\in \dot{H}^{-1}\cap L^1\cap L^\infty$. Here, $\dot{H}^s= \dot{H}^s(\R^2)$ is the homogeneous Sobolev space and $L^p= L^p(\R^2)$ is the usual Lebesgue space, $p\in [1,\infty]$. If $\omega_0\in L^1\cap L^p$ with some $1<p<\infty$, DiPerna and Majda \cite{DipMaj87} have proved the existence of $L^1\cap L^p$ weak solutions, but uniqueness is still open until now. Recently, there are some ``negative'' results which imply non-uniqueness of weak solutions. Bressan and Shen showed non-uniqueness with initial vorticity in $L_{loc}^p$ by numerical method, and Vishik \cite{Vishik18-1,Vishik18-2} proved that there are infinitely many weak solutions in $L^1\cap L^p$ for 2D Euler equation in the presence of a  carefully chosen force, see also \cite{ABCDGJK} for a revisitation of the latter results.

Inspired by the regularization by noise phenomena, a natural question is whether we can find a noise, which is meaningful in physics, to restore uniqueness of $L^1\cap L^p$ solutions; see the review \cite{Flandoli15} by Flandoli for more details. Galeati and Luo \cite{GalLuo23} applied the Girsanov transform to show well-posedness in law for a general class of stochastic 2D fluid dynamical equations, including the case of finite-enstrophy solutions for stochastic logarithmically regularized 2D Euler equation, perturbed by suitable transport noise of Kraichnan type.
In the recent paper \cite{CogMau}, Coghi and Maurelli developed a new strategy to obtain weak existence of \eqref{Ito-Euler} for $\dot{H}^{-1}$ initial data and pathwise uniqueness for $\dot{H}^{-1}\cap L^1\cap L^p$ initial data with $p>3/2$; the uniqueness part is not known in the deterministic setting. The key ingredient in Coghi and Maurelli's proof is that the Kraichnan noise strongly affects the negative Sobolev norms of solutions; indeed, the noise produces an extra $H^{-\alpha}$ bound on solutions to \eqref{Ito-Euler} in the usual $\dot{H}^{-1}$ energy estimate, where $H^{-\alpha}$ is the inhomogeneous Sobolev space and $\alpha\in (0,1)$ is the index in \eqref{Q^}. This $H^{-\alpha}$ bound cancels the singularity arising from the nonlinear term and leads to pathwise uniqueness by application of the Gr\"onwall inequality.

To give a clearer statement of the results, let us recall the definition of $\dot{H}^{-1}$ weak solutions to the stochastic 2D Euler equation \eqref{Ito-Euler}.

\begin{define}
	A $\dot{H}^{-1}$ weak solution to \eqref{Ito-Euler} is an object  $(\Omega,\mathcal{A},(\mathcal{F}_t)_t,\P,(W^k)_k,\omega)$, where $(\Omega,\mathcal{A},(\mathcal{F}_t)_t,\P)$ is a filtered probability space  with the usual condition, $(W^k)_k$ is a sequence of real independent $(\mathcal{F}_t)_t$-Brownian motions, $\omega:[0,T]\times \Omega\rightarrow \dot{H}^{-1}$ is a $(\mathcal{F}_t)_t$-progressively measurable process satisfying
	\begin{equation*}
		\omega\in L_t^\infty(\dot{H}^{-1})\cap C_t(H^{-4}),\quad \P\text{-a.s.},
	\end{equation*}
	and for every $t\in [0,T]$, the following equation holds in $H^{-4}$:
	\begin{equation}\label{weak}
		\omega_t=\omega_0-\int_{0}^{t}\curl \div (u_s\otimes u_s)\, \d s-\sum_{k}\int_{0}^{t}\div (\sigma_k\omega_s)\, \d W_s^k+\frac{\pi}{4\alpha}\int_{0}^{t}\Delta\omega_s\, \d s,
	\end{equation}
	where $u=K*\omega$ and $K$ is the Biot-Savart kernel on $\R^2$.
\end{define}

\begin{rmk}
	If we know $\omega\in L_t^\infty(\dot{H}^{-1})$, then $u\in L_t^\infty(L^{2})$ and $u\otimes u\in L_t^\infty(L^{1})$. By Sobolev embedding $L^1(\R^2)\hookrightarrow H^{-1-\epsilon}(\R^2)$ for all $\epsilon>0$, we have $\curl \div (u\otimes u)\in L_t^\infty(H^{-3-\epsilon})$. Now if we neglect the stochastic integral term in \eqref{weak}, we find that $\frac{d\omega}{dt}$ is in $L_t^\infty(H^{-4})$ which implies that $\omega\in C_t(H^{-4})$. In fact, this observation still holds true when the stochastic term is considered. One can prove this rigorously using the stopping time trick as in \cite[Section 5]{CogMau}.
\end{rmk}

The main results in \cite{CogMau} are as follows:

\begin{thm*}[\cite{CogMau}, Theorems 2.11 and 2.12]
\begin{itemize}
	\item[\rm (1)]\textnormal{(Weak existence)}
	Assume the initial data $\omega_0$ is in $\dot{H}^{-1}$, then there exists a $\dot{H}^{-1}$ weak solution to \eqref{Ito-Euler},  satisfying
	\begin{equation}\label{energy-bound}
		\sup\limits_{t\in [0,T]}\E\big[\Vert \omega_t\Vert_{\dot{H}^{-1}}^2\big]
		+\int_{0}^{T}\E\big[\Vert \omega_t\Vert_{H^{-\alpha}}^2\big]\, \d t
		\lesssim \Vert \omega_0\Vert_{\dot{H}^{-1}}^2.
	\end{equation}
Moreover, if the initial datum $\omega_0$ belongs in addition to $ L^1\cap L^p$ where $1<p<\infty$, then the weak solution to \eqref{Ito-Euler} verifies also
	\begin{equation*}
		\sup_{t\in [0,T]}\big(\Vert \omega_t\Vert_{L^1}+\Vert \omega_t\Vert_{L^p}\big)
		\leq\Vert \omega_0\Vert_{L^1}+\Vert \omega_0\Vert_{L^p},\quad \P\text{-a.s.}
	\end{equation*}
	
	\item[\rm (2)]\textnormal{(Pathwise uniqueness)} Take $3/2<p<\infty$ and $\max\{0,2/p-1\}<\alpha<\min\{1-1/p,1/2\}$. Define a class
	\begin{equation*}
		\chi:= L^\infty\big(\Omega\times [0,T], L^1 \cap L^p \big)
		\cap L^\infty\big([0,T], L^2(\Omega, \dot H^{-1} \big)
		\cap L^2\big(\Omega\times [0,T], H^{-\alpha}\big).
	\end{equation*}
    Assume the initial vorticity $\omega_0$ is in $L^1\cap L^p\cap \dot{H}^{-1}$. If $\omega^1,\, \omega^2\in\chi$ are two $\dot{H}^{-1}$ weak solutions on the same probability space $(\Omega,\mathcal{A},(\mathcal{F}_t)_t,\P)$ and with respect to the same sequence of real independent $(\mathcal{F}_t)_t$-Brownian motions $(W^k)_k$, then $\omega^1=\omega^2$ $\P\text{-a.s.}$
\end{itemize}
\end{thm*}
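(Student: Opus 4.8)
The plan is to run the $\dot{H}^{-1}$ energy estimate on the difference of two solutions, using the extra $H^{-\alpha}$ dissipation of the Kraichnan noise recorded in \eqref{energy-bound} to absorb the singular part of the nonlinearity. Write $\xi=\omega^1-\omega^2$, $v=u^1-u^2=K*\xi$, and let $\psi=(-\Delta)^{-1}\xi$, so that $v=\nabla^\perp\psi$ and $\|\xi\|_{\dot H^{-1}}=\|v\|_{L^2}$. Since the two solutions are driven by the same Brownian motions, subtracting the two copies of \eqref{weak} and using $u^1\omega^1-u^2\omega^2=u^1\xi+v\omega^2$ shows that $\xi$ solves, in $H^{-4}$,
\begin{equation*}
\d\xi+\div(u^1\xi+v\omega^2)\,\d t+\sum_k\div(\sigma_k\xi)\,\d W^k=\frac{\pi}{4\alpha}\Delta\xi\,\d t,\qquad \xi_0=0.
\end{equation*}
The goal is then to prove $\E\|\xi_t\|_{\dot H^{-1}}^2=0$ for every $t$.

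Next I would apply the It\^o formula to $\|\xi_t\|_{\dot H^{-1}}^2=\<\xi,(-\Delta)^{-1}\xi\>$. The contribution of $v\omega^2$ drops out, because $v\cdot\nabla\psi=0$ (the velocity is tangent to the level sets of its own stream function), so the whole nonlinear drift reduces to $2\int(\nabla u^1):(\nabla\psi\otimes\nabla\psi)\,\d x$ after one integration by parts using $\xi=-\Delta\psi$ and $\div u^1=0$. The It\^o corrector $\frac{\pi}{4\alpha}\Delta\xi$ and the quadratic variation $\sum_k\|\div(\sigma_k\xi)\|_{\dot H^{-1}}^2$ should be kept together as a single Fourier multiplier — individually they need not even be finite when $p<2$, since then $\xi\notin L^2$ — and, exactly as in the derivation of \eqref{energy-bound} from the explicit form of $\widehat Q$, their sum is bounded above by $-c\|\xi\|_{H^{-\alpha}}^2+C\|\xi\|_{\dot H^{-1}}^2$ for some $c>0$. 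Here the lower constraint $\alpha>2/p-1$ is precisely what guarantees $\xi\in H^{-\alpha}$, hence that this dissipative term is meaningful, from the $L^p$ bound alone. This leaves
\begin{equation*}
\d\|\xi_t\|_{\dot H^{-1}}^2\le\Big(2\int(\nabla u^1):(\nabla\psi\otimes\nabla\psi)\,\d x-c\|\xi_t\|_{H^{-\alpha}}^2+C\|\xi_t\|_{\dot H^{-1}}^2\Big)\d t+\d M_t
\end{equation*}
with $M$ a local martingale.

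The heart of the matter is to estimate the nonlinear drift against the dissipation. By Calder\'on--Zygmund, $\|\nabla u^1\|_{L^p}\lesssim\|\omega^1\|_{L^p}$, so H\"older gives $|2\int(\nabla u^1):(\nabla\psi\otimes\nabla\psi)|\lesssim\|\omega^1\|_{L^p}\|v\|_{L^{2p'}}^2$ with $p'=p/(p-1)$. I would then interpolate, using the two-dimensional Sobolev embedding $\dot H^{1/p}\hookrightarrow L^{2p'}$ together with $\|v\|_{\dot H^{1-\alpha}}=\|\xi\|_{\dot H^{-\alpha}}\lesssim\|\xi\|_{\dot H^{-1}}+\|\xi\|_{H^{-\alpha}}$ (splitting into low and high frequencies), to obtain
\begin{equation*}
\|v\|_{L^{2p'}}^2\lesssim\|v\|_{L^2}^{2(1-\theta)}\|v\|_{\dot H^{1-\alpha}}^{2\theta},\qquad \theta=\frac{1}{p(1-\alpha)}.
\end{equation*}
The upper constraint $\alpha<1-1/p$ makes $\theta<1$, so Young's inequality yields $|2\int(\nabla u^1):(\nabla\psi\otimes\nabla\psi)|\le\eps\|\xi\|_{H^{-\alpha}}^2+C(\|\omega^1\|_{L^p})\|\xi\|_{\dot H^{-1}}^2$. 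The admissible window $(2/p-1,\min\{1-1/p,1/2\})$ for $\alpha$ is nonempty precisely when $p>3/2$, which is exactly where the hypothesis enters.

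Choosing $\eps<c$ and absorbing the $H^{-\alpha}$ term into the dissipation leaves $\d\|\xi_t\|_{\dot H^{-1}}^2\le C(\|\omega^1\|_{L^p})\|\xi_t\|_{\dot H^{-1}}^2\,\d t+\d M_t$, where by part (1) the coefficient is bounded by a deterministic constant since $\|\omega^1_t\|_{L^p}\le\|\omega_0\|_{L^1}+\|\omega_0\|_{L^p}$ almost surely. Localizing by stopping times to remove the martingale, taking expectations, and invoking Gr\"onwall with $\xi_0=0$ then forces $\E\|\xi_t\|_{\dot H^{-1}}^2=0$, i.e. $\omega^1=\omega^2$. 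I expect the main difficulty to be rigorous rather than formal: the solutions live only in $\dot H^{-1}\cap L^1\cap L^p$, so the It\^o formula above must be justified by mollification, which produces DiPerna--Lions-type commutators for both the Euler drift $u^1\cdot\nabla$ and the Kraichnan noise $\sigma_k\cdot\nabla$ whose vanishing demands care at this low integrability; establishing the clean dissipative bound $-c\|\xi\|_{H^{-\alpha}}^2$ for the noise-plus-corrector multiplier is the other key technical input. It is the interplay of these two steps with the constraints $2/p-1<\alpha<1-1/p$ that pins the range $p>3/2$, and removing it is precisely what this note aims to do.
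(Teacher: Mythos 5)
Your outline follows the same route as the paper's Section 3 (which itself refines Coghi--Maurelli's Section 7 argument): an $\dot H^{-1}$ energy estimate on the difference $\xi=\omega^1-\omega^2$, with the It\^o corrector and the quadratic variation kept together as one Fourier multiplier whose symbol is bounded by $-c\langle n\rangle^{-2\alpha}+C\langle n\rangle^{-2}$, the $v\omega^2$ contribution killed by $\nabla^\perp\psi\cdot\nabla\psi=0$, and the remaining nonlinearity absorbed via Calder\'on--Zygmund, the embedding $\dot H^{1/p}\hookrightarrow L^{2p'}$ and interpolation with exponent $\theta=1/(p(1-\alpha))<1$ iff $\alpha<1-1/p$; this matches \eqref{I_1} and \eqref{A,R_3} exactly. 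The one substantive point where your account diverges from the actual proof is the role of the lower constraint $\alpha>2/p-1$. It is not needed to make the dissipative term meaningful: membership of $\xi$ in $L^2(\Omega\times[0,T],H^{-\alpha})$ is already built into the class $\chi$, and for $\alpha>2/p-1$ that requirement is in fact redundant given the $L^p$ bound, as the paper observes in the remark following its proof. Where the constraint genuinely enters is inside the step you defer to the last sentences: once the It\^o formula is justified by replacing $(-\Delta)^{-1}$ with the mollified kernel $G^\delta$, the noise-plus-corrector multiplier picks up a $\delta$-dependent remainder $R_2=\mathrm{tr}\big[(Q(0)-Q)D^2(G^\delta-G)\big]\varphi$ (and the exact cancellation $v\cdot\nabla\psi=0$ picks up the remainder $I_2$), and Coghi--Maurelli control $\langle R_2*\xi,\xi\rangle$ by Young's inequality and the a.s.\ $L^p$ bound, which forces the embedding $L^p\hookrightarrow H^{-\alpha}$, i.e.\ $\alpha\geq 2/p-1$, whence $p>3/2$. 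That is precisely the step the present note replaces by the Fourier-side bounds $|\widehat{R_2}(n)|\lesssim\langle n\rangle^{-2\alpha}$ and $\widehat{R_2}(n)\to0$ of Lemma \ref{lem-3}. So your architecture is correct and, with $p>3/2$ in hand, the proof can be completed along these lines; but the ``commutators whose vanishing demands care'' that you set aside are not a routine technicality --- they are exactly where the hypothesis $p>3/2$ does its work, and hence exactly what must be reworked to remove it.
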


Intuitively, as the index $\alpha$ gets smaller, the Kraichnan noise becomes spatially rougher and the regularization effect is stronger. From this point of view, the condition $\alpha>2/p-1$, resulting in the restriction $p>3/2$, is unnatural, see also \cite[Remark 2.14]{CogMau}. In this note, we can avoid this unnatural condition through a more refined estimate.
Here is our result which extends part $(2)$ in Coghi and Maurelli's theorem.

\begin{thm}[Pathwise uniqueness] \label{thm}
        Take $1<p<\infty$ and $0<\alpha<\min\{1-1/p,1/2\}$. Assume the initial data $\omega_0$ is in $L^1\cap L^p\cap \dot{H}^{-1}$. If $\omega^1,\, \omega^2\in\chi$ are two $\dot{H}^{-1}$ weak solutions to \eqref{Ito-Euler} on the same probability space $(\Omega,\mathcal{A},(\mathcal{F}_t)_t,\P)$ and with respect to the same sequence of real independent $(\mathcal{F}_t)_t$-Brownian motions $(W^k)_k$, then $\omega^1=\omega^2$ $\P\text{-a.s.}$
\end{thm}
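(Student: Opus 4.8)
The plan is to test the equation satisfied by the difference in the homogeneous space $\dot H^{-1}$ and to show that the Kraichnan noise supplies enough dissipation to absorb the nonlinear flux, via a two-norm interpolation that works for the full range $\alpha<1-1/p$. Write $\xi:=\omega^1-\omega^2$ and $v:=u^1-u^2=K*\xi$, so that $\|\xi_t\|_{\dot H^{-1}}=\|v_t\|_{L^2}$ and, since $\omega^1,\omega^2\in\chi$ start from the same datum, $\xi_0=0$. Subtracting the two copies of \eqref{weak}, $\xi$ solves a linear transport-type SPDE driven by the same $(W^k)_k$. First I would apply It\^o's formula to $t\mapsto\|\xi_t\|_{\dot H^{-1}}^2$ (after the standard localization making the stochastic integral a true martingale, as in \cite[Section 5]{CogMau}) and take expectations.

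The key structural point, established by the same mechanism as the a priori bound \eqref{energy-bound}, is that the second-order It\^o contribution of the transport noise combines with the corrector $\tfrac{\pi}{4\alpha}\Delta\xi$ so that the (possibly infinite, when $p<2$) $\|\xi\|_{L^2}^2$ pieces cancel in Fourier, leaving a genuinely dissipative term. This yields
\begin{equation*}
\tfrac12\tfrac{\d}{\d t}\E\big[\|\xi_t\|_{\dot H^{-1}}^2\big]\le -c\,\E\big[\|\xi_t\|_{H^{-\alpha}}^2\big]+\E\big[N_t\big]
\end{equation*}
for some $c>0$, where $N_t$ is the nonlinear flux. After the usual $2$D cancellations (the transport by $u^1$ and the self-interaction through the common stream function, which vanishes because $v\cdot v^\perp=0$ pointwise), $N_t$ reduces to the bilinear flux $N_t=-\int_{\R^2}(v_t\otimes v_t):\nabla u^2_t\,\d x$, whence $|N_t|\le\|\nabla u^2_t\|_{L^p}\,\|v_t\|_{L^{2p'}}^2$ with $p'=p/(p-1)$, and $\|\nabla u^2_t\|_{L^p}\lesssim\|\omega^2_t\|_{L^p}$ is bounded $\P$-a.s.\ uniformly in $t$ since $\omega^2\in\chi$.

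The heart of the matter, and the step where we gain over \cite{CogMau}, is to estimate $\|v_t\|_{L^{2p'}}$ by interpolating between the two quantities the energy method actually controls: $\|v_t\|_{L^2}=\|\xi_t\|_{\dot H^{-1}}$ and $\|v_t\|_{L^{2/\alpha}}\lesssim\|v_t\|_{\dot H^{1-\alpha}}=\|\xi_t\|_{\dot H^{-\alpha}}$, the latter by the Sobolev embedding $\dot H^{1-\alpha}(\R^2)\hookrightarrow L^{2/\alpha}$ valid for $0<\alpha<1$. Since the hypothesis $\alpha<1-1/p$ is exactly the statement $2\le 2p'<2/\alpha$, the space $L^{2p'}$ is intermediate and
\begin{equation*}
\|v_t\|_{L^{2p'}}\lesssim\|\xi_t\|_{\dot H^{-1}}^{1-\lambda}\,\|\xi_t\|_{\dot H^{-\alpha}}^{\lambda},\qquad \lambda=\frac{1}{p(1-\alpha)}\in(0,1).
\end{equation*}
Crucially $\lambda<1$ precisely because $\alpha<1-1/p$, with \emph{no} lower bound on $\alpha$ required; this is what removes the constraint $p>3/2$. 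Bounding the low frequencies by $\|\xi\|_{\dot H^{-\alpha}}^2\lesssim\|\xi\|_{H^{-\alpha}}^2+\|\xi\|_{\dot H^{-1}}^2$, I would then obtain $|N_t|\lesssim\|\omega^2_t\|_{L^p}\big(\|\xi_t\|_{H^{-\alpha}}^2+\|\xi_t\|_{\dot H^{-1}}^2\big)^{\lambda}\|\xi_t\|_{\dot H^{-1}}^{2(1-\lambda)}$, and apply Young's inequality (admissible since $\lambda<1$) to split off $\epsilon\,\|\xi_t\|_{H^{-\alpha}}^2$, absorbed into the dissipation by taking $\epsilon<c$, plus a remainder $C_\epsilon\,\|\omega^2_t\|_{L^p}^{1/(1-\lambda)}\,\|\xi_t\|_{\dot H^{-1}}^2$.

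After absorption this leaves $\tfrac{\d}{\d t}\E[\|\xi_t\|_{\dot H^{-1}}^2]\le C\,\E[\|\xi_t\|_{\dot H^{-1}}^2]$ with a deterministic constant $C$ (using that $\|\omega^2_t\|_{L^p}$ is a.s.\ bounded), so that Gr\"onwall's inequality together with $\xi_0=0$ forces $\E[\|\xi_t\|_{\dot H^{-1}}^2]=0$ for every $t$, i.e.\ $\omega^1=\omega^2$ $\P$-a.s. I expect the main obstacle to be the rigorous justification of the dissipative term $-c\,\E\|\xi_t\|_{H^{-\alpha}}^2$: since $\xi$ need not belong to $L^2$ when $p<2$, the $\|\xi\|_{L^2}^2$ contributions are only formal and their cancellation must be carried out on the Fourier-localized (or mollified) difference equation, together with the verification that $N_t$ genuinely reduces to $-\int(v\otimes v):\nabla u^2$ in the low-regularity class $\chi$. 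Both points follow the scheme of \cite{CogMau}, but must be re-checked to remain valid on the wider parameter range $0<\alpha<\min\{1-1/p,1/2\}$.
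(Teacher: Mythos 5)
There is a genuine gap, and it sits exactly where you defer to ``re-checking the scheme of \cite{CogMau}''. Your interpolation estimate for the nonlinear flux is correct, but it is not where the restriction $p>3/2$ comes from: Coghi and Maurelli's bound on that term (the estimate \eqref{I_1} for $I_1$) already holds on the full range $\alpha<\min\{1/2,1-1/p\}$, and your ``heart of the matter'' essentially reproduces it. The constraint $\alpha>2/p-1$ arises instead in the rigorous justification of the dissipative term $-c\,\E\big[\|\xi_t\|_{H^{-\alpha}}^2\big]$. Since $\xi$ need not lie in $L^2$ when $p<2$, one cannot test with $G*\xi$ directly; one must replace $G$ by a smooth approximation $G^\delta$ and control the remainder $R_2=\mathrm{tr}\big[(Q(0)-Q)D^2(G^\delta-G)\big]\varphi$ produced by the imperfect cancellation between the It\^o corrector $\frac{\pi}{4\alpha}\Delta$ and the quadratic variation of the transport noise. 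In \cite{CogMau} the pairing $\langle R_2*\xi,\xi\rangle$ is bounded via Young's convolution inequality and the $L^p$ bound on $\xi$, which requires the embedding $L^p\hookrightarrow H^{-\alpha}$, i.e.\ $\alpha>2/p-1$, i.e.\ $p>3/2$. For $p\le 3/2$ that step genuinely fails, so the cancellation does not ``follow the scheme of \cite{CogMau}'' on the wider range; a new argument is required, and your proposal supplies none.

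The paper's actual contribution is precisely this missing piece: a Fourier-side estimate (Lemma \ref{lem-3}) showing $|\widehat{R_2}(n)|\le C\langle n\rangle^{-2\alpha}$ uniformly in $\delta$ together with $\widehat{R_2}(n)\to0$ pointwise as $\delta\to0$, obtained by writing $R_2$ as the singular kernels $A$ and $\bar A$ multiplied by Gaussian cutoffs at scales $\delta^{\pm1/2}$ and controlling the resulting Fourier-side convolutions by maximal-function bounds (Lemmas \ref{maximal} and \ref{dominate1}). This lets $\langle R_2*\xi,\xi\rangle$ be dominated by $\|\xi\|_{H^{-\alpha}}^2$, which the dissipation already controls, and shown to vanish as $\delta\to0$ by dominated convergence, with no appeal to the $L^p$ norm of $\xi$. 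Without an estimate of this kind your argument does not close for $1<p\le3/2$. (A further small slip: the term that vanishes via $\nabla\psi\cdot\nabla^\perp\psi=0$ is the one transporting $\omega^2$ by $v$, so the surviving flux involves $\nabla u^1$ rather than $\nabla u^2$; this does not affect the structure of your estimate.)
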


In the next section we make some preparations needed for proving Theorem \ref{thm}, then we provide the proof in Section 3. We follow the main line of arguments in \cite[Section 7]{CogMau}, by improving the estimate on $R_2= {\rm tr}\big[(Q(0)- Q) D^2(G^\delta -G)\big] \varphi$, where $\{G^\delta \}_{\delta\in (0,1)}$ are smooth approximations of the Green function $G$ and $\varphi\in C_c^\infty(\R^2,[0,1])$ is a localizing function. The main idea is to work in the frequency space where we can show that $\widehat{R_2}$ admits a uniform bound, and vanishes as the parameter $\delta\to 0$, which further implies that $\langle R_2\ast \omega, \omega\rangle$ is infinitesimal.

\section{Preparations}

For clarity, we will adopt the same notations as in \cite{CogMau}. Sometimes, we write $a\lesssim b$ to indicate that $a\le Cb$ for some unimportant constant $C>0$.

Firstly, let us recall some fundamental properties of the Kraichnan covariance matrix $Q$ and the Green function $G$ that will be used in the sequel; for more details, see \cite{CogMau}.
	
	Take $0<\alpha<1$, the Kraichnan covariance function $Q:\mathbb{R}^2\rightarrow \mathbb{R}^{2\times2}$ is determined by its Fourier transform
	\begin{equation*}
		\widehat{Q}(n)=\langle n\rangle^{-(2+2\alpha)}\bigg(I_2-\dfrac{n\otimes n}{\lvert n\rvert^2 }\bigg),\quad n\in \mathbb{R}^2.
	\end{equation*}
The following results are proved in \cite[Proposition 2.7]{CogMau}.

	\begin{lem}\label{Q-structure}
		We have:
		\begin{equation*}
			Q(x)=B_L(\vert x \vert)\dfrac{x\otimes x}{\vert x \vert^2}+B_N(\vert x \vert)\bigg(I_2-\dfrac{x\otimes x}{\vert x \vert^2}\bigg),
		\end{equation*}
		with
		\begin{align*}
			B_L(R)&=\frac{\pi}{2\alpha}-\beta_LR^{2\alpha}-\mathrm{Rem}_{1-u^2}(R), \\
			B_N(R)&=\frac{\pi}{2\alpha}-\beta_NR^{2\alpha}-\mathrm{Rem}_{u^2}(R), \\
			\beta_N&=(1+2\alpha)\beta_L>\beta_L>0,
		\end{align*}
		where the remainders satisfy $\vert \mathrm{Rem}_{1-u^2}(R) \vert+\vert \mathrm{Rem}_{u^2}(R) \vert \lesssim R^2$ for all $R>0$. In particular, we have
		\begin{equation*}
			\vert Q(0)-Q(x) \vert \lesssim \vert x \vert^{2\alpha}\wedge1,\quad \forall x\in \mathbb{R}^2.
		\end{equation*}
	\end{lem}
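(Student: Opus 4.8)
The plan is to establish the decomposition on the Fourier side, exploiting the rotational symmetry encoded in $\widehat Q$. Write $\widehat Q(n)=g(|n|)P(n)$ with $g(r)=(1+r^2)^{-(1+\alpha)}$ and $P(n)=I_2-\frac{n\otimes n}{|n|^2}$, the orthogonal projection onto $n^\perp$; since $\widehat Q$ is real, symmetric and even, $Q(x)=\int_{\R^2}g(|n|)P(n)\cos(n\cdot x)\,\d n$ is a real symmetric matrix field. For any orthogonal matrix $O$ one has $\widehat Q(On)=O\,\widehat Q(n)\,O^\top$, and a change of variables gives $Q(Ox)=O\,Q(x)\,O^\top$. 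Choosing $O$ to be the reflection fixing the line $\R x$ shows $Q(x)$ commutes with it, hence is diagonal in the orthonormal basis $\{x/|x|,x^\perp/|x|\}$; its eigenvalues depend only on $|x|$ and define $B_L(|x|),B_N(|x|)$, which is the asserted form. Evaluating at $x=0$ with $\int_0^{2\pi}P\,\d\theta=\pi I_2$ and $\int_0^\infty g(r)r\,\d r=\tfrac1{2\alpha}$ gives $Q(0)=\tfrac{\pi}{2\alpha}I_2$, i.e. $B_L(0)=B_N(0)=\tfrac{\pi}{2\alpha}$.

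To expose the $|x|^{2\alpha}$ behaviour, set $R=|x|$, $e=x/|x|$ and rescale $n=m/R$ in $Q(0)-Q(x)=\int_{\R^2}g(|n|)P(n)(1-\cos(n\cdot x))\,\d n$ to obtain the exact identity
\begin{equation*}
	Q(0)-Q(x)=R^{2\alpha}\int_{\R^2}(R^2+|m|^2)^{-(1+\alpha)}\,P(m)\,(1-\cos(m\cdot e))\,\d m .
\end{equation*}
Letting $R\to0$ replaces the kernel by $|m|^{-2(1+\alpha)}$, and the limiting matrix $M(e):=\int_{\R^2}|m|^{-2(1+\alpha)}P(m)(1-\cos(m\cdot e))\,\d m$ converges because $1-\cos(m\cdot e)=O(|m|^2)$ near $0$ (integrable as $\alpha<1$) while $|m|^{-2(1+\alpha)}$ is integrable at infinity (as $\alpha>0$). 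The same covariance forces $M(e)=\beta_L\,e\otimes e+\beta_N(I_2-e\otimes e)$, which names the coefficients. Positivity is built in: $\beta_L=e^\top M(e)e=\int_{\R^2}|m|^{-2(1+\alpha)}\big(e^\top P(m)e\big)(1-\cos(m\cdot e))\,\d m>0$ since $P(m)$ is a projection and $1-\cos\ge0$; likewise $\beta_N>0$. The angular weights $e^\top P(m)e=1-(e\cdot m/|m|)^2$ and $(e^\perp)^\top P(m)e^\perp=(e\cdot m/|m|)^2$ are exactly the factors $1-u^2$ and $u^2$ labelling the two remainders.

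For the remainder I would estimate
\begin{equation*}
	Q(0)-Q(x)-R^{2\alpha}M(e)=R^{2\alpha}\int_{\R^2}\big[(R^2+|m|^2)^{-(1+\alpha)}-|m|^{-2(1+\alpha)}\big]P(m)\,(1-\cos(m\cdot e))\,\d m,
\end{equation*}
using the mean value bound $\big|(R^2+|m|^2)^{-(1+\alpha)}-|m|^{-2(1+\alpha)}\big|\lesssim R^2|m|^{-(4+2\alpha)}$ on $\{|m|\ge R\}$, the crude bound on each term on $\{|m|\le R\}$, and $|1-\cos(m\cdot e)|\lesssim\min(|m|^2,1)$. Splitting $\R^2$ into $\{|m|\le R\}$, $\{R\le|m|\le1\}$, $\{|m|\ge1\}$ bounds the integral by $\lesssim R^{2-2\alpha}$, so $\|Q(0)-Q(x)-R^{2\alpha}M(e)\|\lesssim R^2$ for $R\le1$; for $R\ge1$ one uses $\|Q(x)\|\le\int_{\R^2}g(|n|)\,\d n=\pi/\alpha$ and $R^{2\alpha}\le R^2$. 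Defining $\mathrm{Rem}_{1-u^2}(R)=\tfrac{\pi}{2\alpha}-B_L(R)-\beta_LR^{2\alpha}$ and $\mathrm{Rem}_{u^2}$ analogously then yields $|\mathrm{Rem}_{1-u^2}|+|\mathrm{Rem}_{u^2}|\lesssim R^2$, and since the eigenvalues of $Q(0)-Q(x)$ are $\beta_LR^{2\alpha}+\mathrm{Rem}_{1-u^2}$ and $\beta_NR^{2\alpha}+\mathrm{Rem}_{u^2}$, each $\lesssim R^{2\alpha}$ for $R\le1$ and $\lesssim1$ for $R\ge1$, we conclude $|Q(0)-Q(x)|\lesssim|x|^{2\alpha}\wedge1$. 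Finally, the proportionality $\beta_N=(1+2\alpha)\beta_L$ I would read off from incompressibility: $\widehat Q(n)n=0$ gives $\div Q=0$, which for an isotropic planar covariance is the von K\'arm\'an--Howarth identity $B_N(r)=B_L(r)+rB_L'(r)=(rB_L(r))'$; differentiating $B_L(r)=\tfrac{\pi}{2\alpha}-\beta_Lr^{2\alpha}+O(r^2)$ gives $B_N(r)=\tfrac{\pi}{2\alpha}-(1+2\alpha)\beta_Lr^{2\alpha}+O(r^2)$, and matching the (linearly independent) $r^{2\alpha}$ terms forces $\beta_N=(1+2\alpha)\beta_L>\beta_L$.

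The main obstacle is the remainder estimate: because $\beta_L,\beta_N$ arise from a kernel that is genuinely singular at the origin, the $O(R^2)$ control rests entirely on the quadratic cancellation $1-\cos(m\cdot e)=O(|m|^2)$ beating the $|m|^{-2(1+\alpha)}$ singularity uniformly across the three scaling regimes, and on that same cancellation surviving one differentiation, which is what legitimises reading off the factor $(1+2\alpha)$ from the von K\'arm\'an--Howarth identity rather than through an opaque ratio of Gamma functions (the alternative being a direct evaluation of $\beta_N-\beta_L=2\pi\int_0^\infty s^{-1-2\alpha}J_2(s)\,\d s$ via the Mellin transform of the Bessel function).
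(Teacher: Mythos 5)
The paper offers no proof of this lemma at all: it is imported verbatim from \cite[Proposition 2.7]{CogMau}, whose argument (as the remainder labels $\mathrm{Rem}_{1-u^2}$, $\mathrm{Rem}_{u^2}$ indicate) passes through explicit angular/Bessel-type integral representations of $B_L$ and $B_N$ with weights $1-u^2$ and $u^2$, extracting the $R^{2\alpha}$ terms and the constants $\beta_L,\beta_N$ from explicit one-dimensional integrals. Your route is genuinely different and, as far as I can check, correct: isotropy of $\widehat Q$ forces the longitudinal/transverse form; the exact rescaling $Q(0)-Q(x)=R^{2\alpha}\int (R^2+|m|^2)^{-(1+\alpha)}P(m)(1-\cos(m\cdot e))\,\d m$ isolates the $R^{2\alpha}$ singularity cleanly; your three-region splitting with $1-\cos(m\cdot e)\lesssim \min(|m|^2,1)$ does give each region a contribution $\lesssim R^{2-2\alpha}$ to the rescaled integral, hence the $O(R^2)$ remainder for $R\le 1$, and positivity of $\beta_L,\beta_N$ is visible rather than hidden in Gamma-function ratios. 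What your approach costs is the identity $\beta_N=(1+2\alpha)\beta_L$, which in \cite{CogMau} can be read off from explicit constants but in your setup must come from the solenoidal constraint, and this is the one place where your writeup is a sketch rather than a proof. Matching the $r^{2\alpha}$ coefficients in the von K\'arm\'an--Howarth relation $B_N=B_L+rB_L'$ is legitimate only once you know two things you do not verify: first, that $Q$ is smooth away from the origin so that $\mathrm{div}\, Q=0$ (from $\widehat Q(n)n=0$) upgrades to the pointwise ODE relation for $r>0$ --- this follows by splitting $\widehat Q$ into a compactly supported integrable piece plus a classical symbol of order $-(2+2\alpha)$; second, that $r\,\mathrm{Rem}_{1-u^2}'(r)=O(r^2)$, i.e.\ your remainder bound survives one differentiation. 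The latter does hold: differentiating under the integral using $\partial_R(R^2+|m|^2)^{-(1+\alpha)}=-2(1+\alpha)R(R^2+|m|^2)^{-(2+\alpha)}$ and running the same three-region splitting gives $|\mathrm{Rem}'(R)|\lesssim R$ for $R\le1$, after which dividing $B_N-B_L=rB_L'$ by $r^{2\alpha}$ and letting $r\to 0$ (using $2\alpha<2$) forces $\beta_N=(1+2\alpha)\beta_L$. You flag this issue honestly, but flagging is not proving; with those two points written out (or with the alternative direct evaluation of $\beta_N-\beta_L=2\pi\int_0^\infty s^{-1-2\alpha}J_2(s)\,\d s$ that you mention), your argument is complete and arguably more transparent than the special-function computation it replaces.
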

	
	Let $G$ be the Green kernel of $-\Delta$ on $\R^2$, namely $-\Delta G=\delta_0$; one has
	\begin{equation*}
		G(x)=-\frac{1}{2\pi}\log \vert x\vert ,\quad x\neq 0.
	\end{equation*}
    The Biot-Savart kernel is given by
      \[K(x):=\nabla^\perp       G(x)=-\frac{1}{2\pi}\dfrac{x^\perp}{\vert x \vert^2},\quad x\neq 0.\]
    It is easy to check the Hessian matrix of $G$ is
	\begin{equation*}
		D^2G(x)=-\frac{1}{2\pi}\frac{1}{\vert x\vert^2}\bigg(I_2-\dfrac{2x\otimes x}{\vert x \vert^2}\bigg),\quad x\neq0.
	\end{equation*}
	Now we give the definition of the approximation kernel $G^{\delta}$ with $0<\delta<1$. Let $p(t,x)=(4\pi t)^{-1}e^{-\vert x\vert^2/4t}$ be the heat kernel on $\R^2$, it is well known that $G$ and $p$ is related by
	\begin{equation*}
		G(x)=\int_{0}^{\infty}p(t,x)\, \d t.
	\end{equation*}
    Define the smooth approximation kernel $G^{\delta}$ for $0<\delta<1$ as
    \begin{equation*}
    	G^{\delta}(x)=\int_{\delta}^{1/\delta}p(t,x)\, \d t.
    \end{equation*}
	It is easy to check $\widehat{G^\delta}(n)=(2\pi\vert n\vert)^{-2}(e^{-4\pi^2\vert n\vert^2\delta}-e^{-4\pi^2\vert n\vert^2/\delta})$, which implies that $G^{\delta}$ is a Schwartz function. From \cite[Lemma 3.4]{CogMau}, the Hessian matrix is
	\begin{equation*}
		D^2G^{\delta}(x)
		=\frac{1}{2\pi}\frac{1}{\vert x\vert^2}
		\bigg[\bigg(-I_2+\dfrac{2x\otimes x}{\vert x \vert^2}+\dfrac{\delta x\otimes x}{2}\bigg)
		e^{-\frac{\vert x\vert^2\delta}{4}}
		-\bigg(-I_2+\dfrac{2x\otimes x}{\vert x \vert^2}+\dfrac{x\otimes x}{2\delta }\bigg)e^{-\frac{\vert x\vert^2}{4\delta}}\bigg].
	\end{equation*}

	Now we give three simple lemmas that will be used to control the term $R_2$ in the proof of Theorem \ref{thm}. First, let $M$ denote the Hardy-Littlewood maximal operator, that is to say, for $f\in L_{loc}^1(\R^2)$, define
	\begin{equation*}
		Mf(x):=\sup\limits_{r>0} M_r f(x):=\sup\limits_{r>0}\frac{1}{r^2}\int_{B_r(x)}\vert f(y)\vert\, \d y,
	\end{equation*}
    where $B_r(x)=\{y\in \R^2:\vert y-x\vert<r\}$.

	\begin{lem}\label{maximal}
Let $0<\alpha<1$ and $g(x)=\langle x\rangle^{-2\alpha}$, then there exists a constant $C=C_{\alpha}$ such that $Mg(x)\leq Cg(x)$ for all $x\in\R^2$.
	\end{lem}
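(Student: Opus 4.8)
The plan is to exploit two elementary features of $g(x)=\langle x\rangle^{-2\alpha}$: it is radially non-increasing with $\|g\|_\infty=g(0)=1$, and, because $2\alpha<2$, it fails to be integrable on $\R^2$. The second point is the real issue: one cannot simply bound $M_rg(x)\le r^{-2}\|g\|_{L^1}$, so the argument must track that $\int_{B_R(0)}g\,\d y$ grows like $R^{2-2\alpha}$ and balance this growth against the decay of $g(x)$ and the $r^{-2}$ normalization of $M_r$. I would fix $x\in\R^2$ and bound $M_rg(x)$ uniformly over all $r>0$ by a constant multiple of $g(x)$, treating separately the region near the origin and the region away from it.

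For the near-origin region I would use the crude but universal estimate $M_rg(x)\le \pi$, which follows from $g\le1$ together with $r^{-2}\lvert B_r(x)\rvert=\pi$ (note the normalization in the definition of $M_r$ is $r^{-2}$, not $\lvert B_r\rvert^{-1}$). Since $g(x)\ge\langle 1\rangle^{-2\alpha}=2^{-\alpha}$ whenever $\lvert x\rvert\le1$, this gives $M g(x)\le \pi\le\pi 2^{\alpha}g(x)$ on that region. For $\lvert x\rvert>1$ I would split according to the radius. When $r\le\lvert x\rvert/2$, every $y\in B_r(x)$ satisfies $\lvert y\rvert\ge\lvert x\rvert/2$, hence $\langle y\rangle\ge\tfrac12\langle x\rangle$ and $g(y)\le 2^{2\alpha}g(x)$ by monotonicity; averaging yields $M_rg(x)\le\pi 2^{2\alpha}g(x)$. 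When $r>\lvert x\rvert/2$ (so in particular $r>1/2$), I would enclose $B_r(x)\subset B_{3r}(0)$ and integrate $g$ radially, obtaining $\int_{B_{3r}(0)}g\,\d y\lesssim_\alpha (1+9r^2)^{1-\alpha}$ via $\int_0^{R}(1+\rho^2)^{-\alpha}\rho\,\d\rho=\tfrac1{2(1-\alpha)}\big[(1+R^2)^{1-\alpha}-1\big]$, while the lower bound $g(x)=\langle x\rangle^{-2\alpha}\ge(1+4r^2)^{-\alpha}$ comes from $\lvert x\rvert<2r$. Dividing, the ratio $M_rg(x)/g(x)$ is controlled by $r^{-2}(1+9r^2)^{1-\alpha}(1+4r^2)^{\alpha}$, and for $r>1/2$ the powers of $r$ cancel exactly, leaving a constant $C_\alpha$.

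The step I expect to be the main obstacle is precisely this large-$r$ regime, where the power $r^{2(1-\alpha)}$ coming from the non-integrable tail of $g$ must cancel against the $r^{2\alpha}$ from $g(x)^{-1}$ and the $r^{-2}$ from the average; this is the scale-critical balance that makes the inequality sharp rather than slack, and it is exactly where the hypothesis $\alpha<1$ is needed to keep the constant $\tfrac1{1-\alpha}$ finite. Taking the maximum of the constants arising in the three cases then produces a single $C_\alpha$ with $Mg(x)\le C_\alpha g(x)$ for every $x$, as claimed.
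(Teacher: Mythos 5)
Your proof is correct and follows essentially the same route as the paper's: a crude uniform bound for $\lvert x\rvert\le 1$, then for $\lvert x\rvert>1$ the same dichotomy $r\le\lvert x\rvert/2$ versus $r>\lvert x\rvert/2$, with the inclusion $B_r(x)\subset B_{3r}(0)$ and the radial integration $\int_0^{3r}(1+\rho^2)^{-\alpha}\rho\,\d\rho$ handling the large-radius regime. The scale balance you highlight ($r^{-2}\cdot r^{2(1-\alpha)}\cdot r^{2\alpha}=1$ for $r>1/2$) is exactly the computation in the paper's second case.
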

	
	\begin{proof}
		 It is trivial that $Mg$ is bounded since $g$ is bounded, so it suffices to prove the estimate for all $x\in \R^2$ such that $\vert x\vert>1$.
		 Arbitrarily fix a point $x\in \R^2$ with $\vert x\vert>1$. On the one hand, if $r\leq\vert x\vert/2$, then for all $y\in B_r(x)$, we have $\vert y\vert \geq \vert x\vert/2$, thus
		 \begin{equation*}
		 	M_rg(x)=\frac{1}{r^2}\int_{B_r(x)}\langle y\rangle^{-2\alpha}\, \d y
		 	\leq C\langle x\rangle^{-2\alpha}.
		 \end{equation*}
	     On the other hand, if $r>\vert x\vert/2>1/2$, then $B_r(x)\subset B_{3r}(0)$, thus
	     \begin{equation*}
	     	\begin{split}
	     		M_rg(x)&\leq \frac{1}{r^2}\int_{B_{3r}(0)}\langle y\rangle^{-2\alpha}\, \d y
	     		=\frac{2\pi}{r^2}\int_{0}^{3r}\frac{s}{(1+s^2)^\alpha}\, \d s  \\
	 	     	&= \frac\pi{(1-\alpha)r^2} \big[(1+9r^2)^{1-\alpha}-1 \big]
	     		\leq C_{\alpha}\langle r\rangle^{-2\alpha}
	     		\leq C_{\alpha}\langle x\rangle^{-2\alpha}.
	     	\end{split}
	     \end{equation*}
	     Combining the above two estimates, we get the desired result.
	\end{proof}

      The following results are taken from \cite[Proposition 1.16, Remark 1.17]{Bahouri2011FourierAA}.
    \begin{lem}\label{dominate1}
    	Suppose $\psi\in L^1(\R^2)$ is nonnegative, radial and nonincreasing, then for all measurable functions $f$, we have
    	\begin{equation*}\label{special}
    		\vert \psi*f(x)\vert \leq \Vert \psi\Vert_{L^1}Mf(x),\quad \forall x\in \R^2.
    	\end{equation*}
        Moreover, for a general measurable function $K$, we have
        \begin{equation*}
        	\vert \psi*f(x)\vert \leq \Vert S(\psi)\Vert_{L^1}Mf(x),\quad \forall x\in \R^2,
        \end{equation*}
        where $S(\psi)(y):=\sup\limits_{\vert z\vert>\vert y\vert}\vert \psi(z)\vert$ is nonnegative, radial and nonincreasing.
    \end{lem}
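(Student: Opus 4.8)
The plan is to exploit the ``layer-cake'' structure of a radial, nonincreasing function: such a $\psi$ can be written as a superposition of indicator functions of centred balls, which reduces the convolution $\psi*f$ to an average of ball-averages of $f$, and each ball-average is by definition dominated by $Mf$. This reduces everything to two applications of Tonelli's theorem together with the definition of the maximal operator.

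First I would record the decomposition. Writing $\psi(x)=h(\vert x\vert)$ with $h:[0,\infty)\to[0,\infty)$ nonincreasing, and noting that $\psi\in L^1(\R^2)$ forces $h(t)\to 0$ as $t\to\infty$, the tail function $t\mapsto h(t)$ determines a unique nonnegative Borel measure $\nu$ on $(0,\infty)$ through $h(t)=\nu\big((t,\infty)\big)$ (equivalently $\nu=-\,\d h$ as a Stieltjes measure). This gives the pointwise identity
\begin{equation*}
	\psi(x)=\int_0^\infty \mathbbm{1}_{B_r(0)}(x)\,\d\nu(r),\qquad x\in\R^2 .
\end{equation*}

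Next I would bound the convolution. As everything is nonnegative, Tonelli's theorem and the above identity give
\begin{equation*}
	\vert \psi*f(x)\vert \le \int_{\R^2}\psi(y)\,\vert f(x-y)\vert\,\d y
	=\int_0^\infty \bigg(\int_{B_r(x)}\vert f(z)\vert\,\d z\bigg)\,\d\nu(r).
\end{equation*}
By the definition of the maximal operator, $\int_{B_r(x)}\vert f\vert = r^2 M_r f(x)\le r^2 Mf(x)$, so the right-hand side is at most $Mf(x)\int_0^\infty r^2\,\d\nu(r)$. A second application of Tonelli to the decomposition of $\psi$ yields $\Vert\psi\Vert_{L^1}=\int_0^\infty \vert B_r(0)\vert\,\d\nu(r)=\pi\int_0^\infty r^2\,\d\nu(r)$, whence $\int_0^\infty r^2\,\d\nu(r)=\pi^{-1}\Vert\psi\Vert_{L^1}\le \Vert\psi\Vert_{L^1}$. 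This proves the first inequality (in fact with the smaller constant $\pi^{-1}\Vert\psi\Vert_{L^1}$).

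For the second assertion I would simply reduce to the first one via the majorant. For a general measurable $\psi$, the function $S(\psi)$ is, by construction, nonnegative, radial and nonincreasing and dominates $\vert\psi\vert$ (almost everywhere). Hence $\vert\psi*f\vert\le \vert\psi\vert*\vert f\vert\le S(\psi)*\vert f\vert$, and applying the first part with $S(\psi)$ in place of $\psi$ gives $\vert\psi*f(x)\vert\le \Vert S(\psi)\Vert_{L^1}Mf(x)$. The steps are otherwise routine; the only points needing care are the measure-theoretic bookkeeping --- existence and nonnegativity of $\nu$, which rest on the monotonicity and $L^1$-decay of $h$, and the two exchanges of integrals. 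The sole genuinely delicate point is the normalization constant: with the paper's convention $M_rf=r^{-2}\int_{B_r}$ the argument produces $\pi^{-1}\Vert\psi\Vert_{L^1}$ rather than $\Vert\psi\Vert_{L^1}$, which is consistent with and in fact stronger than the stated bound.
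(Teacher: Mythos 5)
Your proof is correct, and it coincides with the paper's treatment: the paper does not prove this lemma at all but quotes it from Bahouri--Chemin--Danchin (Proposition 1.16, Remark 1.17), and your layer-cake decomposition $\psi(x)=\int_0^\infty \mathbbm{1}_{B_r(0)}(x)\,\d\nu(r)$ followed by Tonelli is exactly the classical argument behind that citation. You also correctly flagged the only two delicate points: that $\vert\psi\vert\le S(\psi)$ holds only almost everywhere (the exceptional set is contained in countably many spheres, hence null), and that the paper's non-normalized maximal function $M_rf=r^{-2}\int_{B_r}\vert f\vert$ actually yields the stronger constant $\pi^{-1}\Vert\psi\Vert_{L^1}$.
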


    \begin{rmk}\label{remark}
    	For all $t>0$, define $\psi_t(x)=t^{-2} \psi(t^{-1}x)$, then we have $S(\psi_t)=(S(\psi))_t$. Indeed, for all $y\in \R^2$, we have
    	\begin{equation*}
    		S(\psi_t)(y)
    		=\sup\limits_{\vert z\vert>\vert y\vert}\vert \psi_t(z)\vert
    		=t^{-2}\sup\limits_{t^{-1}\vert z\vert>t^{-1}\vert y\vert}
    		\vert \psi(t^{-1}z)\vert
    		=t^{-2}S(\psi)(t^{-1}y)=(S(\psi))_t(y).
    	\end{equation*}
    \end{rmk}

    \begin{lem}\label{converge}
    	Suppose $\rho\in L^1(\R^2)$ and $\int \rho=a\in \R$. Set $\rho_t(x)=t^{-2}\rho(t^{-1}x)$, then we have
    	\begin{itemize}
    		\item[\rm (1)] if $f$ is bounded and uniformly continuous, then $\rho_t*f\rightarrow af$ uniformly as $t\rightarrow 0$;
    		\item[\rm (2)] if $f$ is continuous and vanishes at infinity, then $\rho_t*f\rightarrow 0$ pointwise as $t\rightarrow \infty$.
    	\end{itemize}
    \end{lem}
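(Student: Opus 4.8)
The plan is to reduce both statements to a single substitution and then apply, respectively, uniform continuity (for part (1)) and dominated convergence (for part (2)). First I would rewrite the convolution by the change of variables $z=y/t$: since $\rho_t(y)=t^{-2}\rho(y/t)$, one has
\begin{equation*}
	\rho_t*f(x)=\int_{\R^2}t^{-2}\rho(y/t)f(x-y)\,\d y=\int_{\R^2}\rho(z)f(x-tz)\,\d z,
\end{equation*}
and in particular $\int_{\R^2}\rho_t=\int_{\R^2}\rho=a$ for every $t>0$. This identity is the common starting point: it replaces the $t$-dependent concentration/spreading of $\rho_t$ by a simple rescaling of the argument of $f$, and it exhibits the mass $a$ explicitly.

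For part (1), I would subtract $af(x)=f(x)\int_{\R^2}\rho(z)\,\d z$ and write
\begin{equation*}
	\rho_t*f(x)-af(x)=\int_{\R^2}\rho(z)\big[f(x-tz)-f(x)\big]\,\d z.
\end{equation*}
Fix $\eps>0$ and split the integral at $\vert z\vert=R$. On the tail $\{\vert z\vert>R\}$ the bracket is bounded by $2\Vert f\Vert_{L^\infty}$, so its contribution is at most $2\Vert f\Vert_{L^\infty}\int_{\vert z\vert>R}\vert\rho(z)\vert\,\d z$, which can be made $<\eps/2$ by choosing $R$ large, using $\rho\in L^1$; this bound does not depend on $x$. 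On the core $\{\vert z\vert\le R\}$, uniform continuity of $f$ furnishes a modulus $\eta(h):=\sup_{x}\sup_{\vert w\vert\le h}\vert f(x-w)-f(x)\vert\to0$ as $h\to0$, so the contribution is at most $\Vert\rho\Vert_{L^1}\,\eta(tR)$, again uniform in $x$; choosing $t$ small makes this $<\eps/2$. Hence $\Vert\rho_t*f-af\Vert_{L^\infty}\to0$.

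For part (2), I would use the same identity $\rho_t*f(x)=\int_{\R^2}\rho(z)f(x-tz)\,\d z$ and fix $x$. For every $z\neq0$ one has $\vert x-tz\vert\ge t\vert z\vert-\vert x\vert\to\infty$ as $t\to\infty$, so $f(x-tz)\to0$ because $f$ vanishes at infinity; the exceptional set $\{z=0\}$ is Lebesgue-null. The integrand is dominated by $\Vert f\Vert_{L^\infty}\vert\rho(z)\vert\in L^1$, so dominated convergence yields $\rho_t*f(x)\to\int_{\R^2}\rho(z)\cdot0\,\d z=0$.

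The arguments are routine; the only point requiring care is the uniformity in $x$ in part (1). I would make sure that both the tail estimate (uniform because the $L^1$ remainder of $\rho$ does not see $x$) and the core estimate (uniform because the modulus $\eta$ is itself a supremum over $x$) hold simultaneously for all $x\in\R^2$, which is exactly what uniform — as opposed to mere — continuity of $f$ buys. In part (2) the only subtlety is to discard the null set $\{z=0\}$ before passing to the limit, which is harmless under the integral.
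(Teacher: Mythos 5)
Your proof is correct and follows essentially the same route as the paper: part (2) is handled by the identical change of variables $\rho_t*f(x)=\int\rho(z)f(x-tz)\,\d z$ followed by dominated convergence, while for part (1) the paper simply invokes the well-known approximation-of-identity argument, of which your tail/core splitting with the uniform modulus of continuity is precisely the standard proof. No gaps; your attention to uniformity in $x$ in part (1) and to the null set $\{z=0\}$ in part (2) is exactly the care the cited classical result requires.
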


    \begin{proof}
    	$(1)$ is well known by approximation of identity. Regarding $(2)$, for any $x\in \R^2$ fixed, by change of variables, we have
    	\begin{equation*}
    		\rho_t*f(x)=\int f(x-y)\rho_t(y) \, \d y
    		=\int f(x-ty)\rho(y)\, \d y,
    	\end{equation*}
        which converges to $0$ as $t\rightarrow \infty$ by dominated convergence theorem.
    \end{proof}

\section{Proof of the main result}

\begin{proof}
    	Assume that $\omega^1,\, \omega^2\in \chi$ are two $\dot{H}^{-1}$ weak solutions to \eqref{Ito-Euler} on the same probability space $(\Omega,\mathcal{A},(\mathcal{F}_t)_t,\P)$ and with respect to the same sequence of real independent $(\mathcal{F}_t)_t$-Brownian motions $(W^k)_k$. Then the difference $\omega:=\omega^1-\omega^2$ satisfies the following equality in $H^{-4}$:
    	\begin{equation*}
    		\d \omega
    		+\big[(K*\omega^1)\cdot\nabla\omega+(K*\omega)\cdot \nabla\omega^2\big]\,\d t
    		+\sum_{k}\sigma_k\cdot\nabla\omega\, \d W^k
    		=\frac{\pi}{4\alpha}\Delta\omega\,\d t.
    	\end{equation*}
    	Recall the smooth kernel $G^\delta$ defined in Section 2, applying It\^o's formula to $\langle \omega,G^\delta*\omega\rangle$ and integrating by parts yield
    	\begin{equation}\label{Ito-formula}
    		\begin{split}
    			\d \langle \omega,G^\delta*\omega\rangle
    			&=2\langle \nabla G^\delta*\omega, (K*\omega^1)\omega\rangle\,\d t
    			+2\langle\nabla G^\delta*\omega, (K*\omega)\omega^2\rangle\,\d t  \\
    			&\quad+2\sum_{k}\langle \nabla G^\delta*\omega,\sigma_k\omega\rangle\,\d W^k
    			+\frac{\pi}{2\alpha}\langle G^\delta*\omega,\Delta\omega\rangle \,\d t\\
    			&\quad+\sum_k\langle \sigma_k\cdot\nabla\omega, G^\delta*(\sigma_k\cdot\nabla\omega)\rangle\,\d t\\
    			&=:2I_1\,\d t+2I_2\,\d t+\d M+J\,\d t.
    		\end{split}
    	\end{equation}
        Since the kernel $G^\delta$ is a Schwartz function, one easily checks that the stochastic integral $M$ is a true martingale and vanishes once taking expectation.

	        Concerning the nonlinear term $I_1$, under the condition $\alpha<\min\{1/2,1-1/p\}$, Coghi and Maurelli showed in \cite[Section 7]{CogMau} that, for every $0<\epsilon\ll1$, there exists a positive constant $C_\epsilon=C(\epsilon,\alpha,\Vert \omega_0\Vert_{L^1}+\Vert \omega_0\Vert_{L^p})$ such that
        \begin{equation}\label{I_1}
        	\int_{0}^{t}\E[\vert I_1\vert]\,\d s
        	\leq \epsilon\int_{0}^{t}\E\big[\Vert \omega_s\Vert_{H^{-\alpha}}^2\big]\, \d s
        	+ C_\epsilon\int_{0}^{t}\E\big[\Vert \omega_s\Vert_{\dot{H}^{-1}}^2\big]\, \d s.
        \end{equation}

        For the nonlinear term $I_2$, noting that $\langle\nabla G*\omega, (K*\omega)\omega^2\rangle=\langle\nabla (G*\omega), \nabla^\perp (G*\omega)\omega^2\rangle=0$, we have $I_2=\langle\nabla (G^\delta-G)*\omega, (K*\omega)\omega^2\rangle$.
        So intuitively, $I_2$ should converge to zero as $\delta\rightarrow 0$. Indeed, by results in \cite[Section 7]{CogMau}, we have
        \begin{equation}\label{I_2}
        	\int_{0}^{T}\E[\vert I_2\vert]\,\d s\rightarrow0,\quad\text{as }\delta\rightarrow0.
        \end{equation}

        Our task now is to compute the term
        \begin{equation*}
        	J=\frac{\pi}{2\alpha}\langle G^\delta*\omega,\Delta\omega\rangle
        	+\sum_k\langle \sigma_k\cdot\nabla\omega, G*(\sigma_k\cdot\nabla\omega)\rangle,
        \end{equation*}
        which will cancel out the $L^2\big([0,t]\times \Omega,H^{-\alpha}\big)$-norm in the estimate of $I_1$. Using integration by parts and recalling that $Q(0)=\frac{\pi}{2\alpha}I_2$, we get
        \begin{equation*}
        	\frac{\pi}{2\alpha}\langle G^\delta*\omega,\Delta\omega\rangle=\frac{\pi}{2\alpha}\langle \Delta G^\delta*\omega,\omega\rangle
        	=\big\langle\mathrm{tr}\big[Q(0)D^2G^\delta\big]*\omega,\omega\big\rangle.
        \end{equation*}
        Integrating by parts again leads to
        \begin{equation*}
        	\begin{split}
        		\sum_k\langle \sigma_k\cdot\nabla\omega, G^\delta*(\sigma_k\cdot\nabla\omega)\rangle
        		&=\sum_{k}\big\langle \nabla\cdot(\sigma_k\omega),\nabla\cdot \big(G^\delta*(\sigma_k\omega)\big)\big\rangle \\
        		&=-\sum_{k}\langle \sigma_k\omega,D^2 G^\delta*(\sigma_k\omega)\rangle \\
        		&=-\sum_{i,j}\iint\partial_{ij}^2G^\delta(x-y)\sum_{k}\sigma_k^i(x)\sigma_k^j(y) \omega(x)\omega(y)\,\d x\d y \\
        		&=-\big\langle\mathrm{tr}\big[QD^2G^\delta\big]*\omega,\omega\big\rangle.
        	\end{split}
        \end{equation*}
        Hence we have
        \begin{equation*}
        	J=\big\langle\mathrm{tr}\big[(Q(0)-Q)D^2G^\delta\big]*\omega,\omega\big\rangle.
        \end{equation*}
    	Let $\varphi$ be a radial smooth function satisfying $0\leq \varphi \leq 1$ everywhere, $\varphi(x)=1$ for $\vert x\vert\leq1$ and $\varphi(x)=0$ for $\vert x\vert\geq2$, we split the term $\mathrm{tr}\big[(Q(0)-Q)D^2G^\delta\big]$ as follows:
    	\begin{equation*}
    		\begin{split}
    			\mathrm{tr}\big[(Q(0)-Q)D^2G^\delta\big]
    			&=\mathrm{tr}\big[(Q(0)-Q)D^2G\big]\varphi  \\
    			&\quad+\mathrm{tr}\big[(Q(0)-Q)D^2(G^\delta-G)\big]\varphi \\
    			&\quad+\mathrm{tr}\big[(Q(0)-Q)D^2G^\delta\big](1-\varphi)  \\
    			&=:A+R_2+R_3.
    		\end{split}
    	\end{equation*}
    	By \cite[Lemmas 4.3 and 4.5]{CogMau}, there exists two positive constants $c$ and $C$ such that for all $n\in \R^2$,
    	\begin{equation}\label{A,R_3}
    		\widehat{A}(n)\leq -c\langle n\rangle^{-2\alpha}+C\langle n\rangle^{-2},\quad
    		\widehat{R_3}(n)\leq C\vert n\vert^{-2}.
    	\end{equation}
    	
    	It remains to estimate $R_2$.
    	
    \begin{lem}\label{lem-3}
    We have $\vert\widehat{R_2}(n)\vert\leq C\langle n\rangle^{-2\alpha}$, where the constant $C$ is independent of $\delta$ and $n$; moreover, $\vert\widehat{R_2}(n)\vert\rightarrow 0\text{ as }\delta\rightarrow0$ for all $n\in \R^2$.
    \end{lem}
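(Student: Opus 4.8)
The plan is to move to the frequency side and to decouple the localiser $\varphi$ from the genuine kernel. Set $g:=\mathrm{tr}\big[(Q(0)-Q)D^2(G^\delta-G)\big]$, so that $R_2=g\varphi$ and hence $\widehat{R_2}=\widehat g*\widehat\varphi$. Since $\varphi\in C_c^\infty$, its transform $\widehat\varphi$ is Schwartz, so $S(\widehat\varphi)$ is radial, nonincreasing and integrable. Consequently the statement reduces to two facts about the un-localised symbol $\widehat g$. First, if $|\widehat g(m)|\le C\langle m\rangle^{-2\alpha}$ with $C$ independent of $\delta$, then Lemma \ref{dominate1} (applied with $\psi=\widehat\varphi$, $f=\widehat g$) gives $|\widehat{R_2}(n)|\le\|S(\widehat\varphi)\|_{L^1}\,M(\widehat g)(n)$, and Lemma \ref{maximal} together with $|\widehat g|\le C\langle\cdot\rangle^{-2\alpha}$ bounds the right-hand side by $C'\langle n\rangle^{-2\alpha}$. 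Second, writing $\widehat{R_2}(n)=\int_{\R^2}\widehat g(m)\widehat\varphi(n-m)\,\d m$, if $\widehat g(m)\to0$ as $\delta\to0$ for each $m$ while remaining dominated by the $\delta$-independent integrable envelope $C\langle m\rangle^{-2\alpha}|\widehat\varphi(n-m)|$, the dominated convergence theorem yields $\widehat{R_2}(n)\to0$. Thus both assertions follow from a uniform bound and a pointwise limit for $\widehat g$.

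To analyse $\widehat g$ I would use the heat-kernel representation $G^\delta-G=-\int_0^\delta p(t,\cdot)\,\d t-\int_{1/\delta}^\infty p(t,\cdot)\,\d t$ together with $\widehat{D^2p(t,\cdot)}(m)=-4\pi^2(m\otimes m)e^{-4\pi^2|m|^2t}$ and the identity $Q(0)=\int_{\R^2}\widehat Q$. Performing the $t$-integration over $(0,\delta)\cup(1/\delta,\infty)$ converts each Gaussian $e^{-4\pi^2|l|^2t}$ into $\frac{1-M_\delta(l)}{4\pi^2|l|^2}$, where $M_\delta(l):=e^{-4\pi^2|l|^2\delta}-e^{-4\pi^2|l|^2/\delta}$ is the multiplier relating $G^\delta$ to $G$; using $Q(0)=\int\widehat Q$ to recast the constant-matrix contribution then yields the compact formula
\begin{equation*}
\widehat g(m)=-\int_{\R^2}\widehat Q(m-l):\Big[\tfrac{l\otimes l}{|l|^2}\big(1-M_\delta(l)\big)-\tfrac{m\otimes m}{|m|^2}\big(1-M_\delta(m)\big)\Big]\,\d l .
\end{equation*}
The bracketed matrix vanishes at $l=m$; this is precisely the cancellation that the constant matrix $Q(0)$ alone cannot provide. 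Using $|\widehat Q(m-l)|\lesssim\langle m-l\rangle^{-2-2\alpha}$, which concentrates the integral near $l=m$, I would estimate the bracket through its modulus of continuity, the radial nonincreasing Gaussian/cut-off factors being handled uniformly in $t$ (hence in $\delta$) by Lemma \ref{dominate1} and the scaling $S(\psi_t)=(S(\psi))_t$ of Remark \ref{remark}, while the $\delta$-asymptotics of the $t$-integrals are governed by Lemma \ref{converge} (the $t\to0$ part as an approximate identity, the $t\to\infty$ part through decay).

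The heart of the matter, and the step I expect to be the main obstacle, is extracting the rate $\langle m\rangle^{-2\alpha}$ uniformly in $\delta$. Splitting $g$ into its $Q(0)$-part and its $Q$-part is fatal, since each is only $O(1)$ at high frequency; indeed $\mathrm{tr}[Q(0)D^2(G^\delta-G)]=\frac{\pi}{2\alpha}\Delta(G^\delta-G)$ has the non-decaying transform $\frac{\pi}{2\alpha}\big(1-M_\delta(m)\big)\to\frac{\pi}{2\alpha}$. The gain comes only from the difference exhibited in the displayed formula, combined with $Q(0)-Q(x)=O(|x|^{2\alpha})$ near the origin from Lemma \ref{Q-structure}. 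Concretely I would split the $l$-integral into the region $|l-m|\lesssim|m|$ near the diagonal, where the Hölder-type modulus of continuity of $\frac{l\otimes l}{|l|^2}$ and of $1-M_\delta$ supplies the exponent $2\alpha$, and the complementary region, controlled by the slowly decaying tail of $\widehat Q$; the delicate point is to arrange this so that neither the small-$\delta$ nor the large-$|m|$ regime degrades the constant. Once $|\widehat g(m)|\le C\langle m\rangle^{-2\alpha}$ is established uniformly in $\delta$, the pointwise limit $\widehat g(m)\to0$ follows from $1-M_\delta(l)=1-e^{-4\pi^2|l|^2\delta}+e^{-4\pi^2|l|^2/\delta}\to0$ and dominated convergence, and the reduction of the first paragraph then closes both assertions of the lemma.
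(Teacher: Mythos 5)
Your overall strategy --- pass to the Fourier side, peel off the localiser via $\widehat{R_2}=\widehat g*\widehat\varphi$ together with Lemmas \ref{dominate1} and \ref{maximal}, and reduce everything to a $\delta$-uniform bound $|\widehat g(m)|\le C\langle m\rangle^{-2\alpha}$ plus a pointwise limit $\widehat g(m)\to0$ --- is coherent, and your convolution formula for $\widehat g$ is correct. But it is genuinely different from the paper's route. The paper never computes the transform of the un-localised symbol $g$; instead it uses the explicit formulas for $D^2G$ and $D^2G^\delta$ to factor $R_2=A\cdot\big(h(\delta^{1/2}\cdot)-1-h(\delta^{-1/2}\cdot)\big)+\bar A\cdot\big(H(\delta^{1/2}\cdot)-H(\delta^{-1/2}\cdot)\big)$ with $h(x)=e^{-|x|^2/4}$, $H(x)=|x|^2e^{-|x|^2/4}$, and $\delta$-\emph{independent} kernels $A,\bar A$ whose transforms are already known to satisfy $|\widehat A(n)|,\,|\mathcal F(\bar A)(n)|\lesssim\langle n\rangle^{-2\alpha}$ by (the proof of) Lemma 4.3 in \cite{CogMau}. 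All $\delta$-dependence is thereby isolated in dilations of fixed Schwartz profiles, which on the Fourier side are $L^1$-normalised families: the uniform bound follows from Lemmas \ref{dominate1} and \ref{maximal}, the vanishing from Riemann--Lebesgue plus Lemma \ref{converge}. Your route, if completed, would reprove the CogMau bound and its $\delta$-perturbation in one stroke, at the price of redoing the hard estimate from scratch.

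And that is exactly where the gap is: the estimate $|\widehat g(m)|\le C\langle m\rangle^{-2\alpha}$, which you yourself flag as the main obstacle, is asserted rather than proved, and the mechanism you name for it is not the right one. The multiplier $1-M_\delta$ has no useful $\delta$-uniform H\"older-$2\alpha$ modulus of continuity: $\|\nabla M_\delta\|_{L^\infty}\sim\delta^{-1/2}$, so interpolating $\min\big(2,\|\nabla M_\delta\|_{L^\infty}|l-m|\big)$ to a power $2\alpha$ produces a constant that blows up as $\delta\to0$. What actually closes the near-diagonal region $|l-m|\le|m|/2$ is a Lipschitz-at-scale-$|m|$ bound: with $P(l)=l\otimes l/|l|^2$ one has $|P(l)-P(m)|\lesssim|l-m|/|m|$, and $|e^{-a|l|^2}-e^{-a|m|^2}|\lesssim|l-m|/|m|$ \emph{uniformly in} $a>0$ because $a|m|^2e^{-a|m|^2/4}\lesssim1$; the exponent $2\alpha$ then comes not from any H\"older regularity of the bracket but from the integration $\frac1{|m|}\int_{|k|\le|m|}\langle k\rangle^{-1-2\alpha}\,\d k\lesssim|m|^{-2\alpha}$, which is also where the hypothesis $\alpha<1/2$ enters (the far region is handled crudely by $\int_{|k|>|m|/2}\langle k\rangle^{-2-2\alpha}\,\d k\lesssim|m|^{-2\alpha}$). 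Until this case analysis is actually carried out, the lemma is not established: your write-up stops precisely at the quantitative step that the whole lemma turns on.
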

    	
    	Assume the assertions hold for the moment and we continue the proof of Theorem \ref{thm}. By Lemma \ref{lem-3}, we have
    	\begin{equation*}
    		\vert\langle R_2*\omega_t,\omega_t\rangle\vert
    		\leq \int \vert\widehat{R_2}(n)\vert\,\vert \hat{\omega}_t(n)\vert^2\,\d n
    		\leq C\Vert \omega_t\Vert_{H^{-\alpha}}^2,
    	\end{equation*}
        which is integrable in $[0,T]\times \Omega$ since the $L^2\big(\Omega\times [0,T], H^{-\alpha}\big)$ norm of $\omega$ is finite. Moreover, since $\vert\widehat{R_2}(n)\vert\rightarrow 0\text{ as }\delta\rightarrow0$, the dominated convergence theorem implies
    	\begin{equation}\label{R_2}
    		\int_{0}^{T}\E\big[\vert\langle R_2*\omega_t,\omega_t\rangle\vert\big]\,\d t
    		\leq\int_{0}^{T}\E\bigg[\int \vert\widehat{R_2}(n)\vert\,\vert \hat{\omega}_t(n)\vert^2\,\d n\bigg]\,\d t \rightarrow0,\quad\text{as }\delta\rightarrow0.
    	\end{equation}
        The estimates \eqref{A,R_3} and \eqref{R_2} yield that
        \begin{equation}\label{J}
        	\begin{split}
        		\int_{0}^{t}\E[J]\,\d s
        		&=\int_{0}^{t}\E\bigg[\int \big(\widehat{A}+\widehat{R_3}+\widehat{R_2}\big)(n)\,\vert \hat{\omega}_s(n)\vert^2\,\d n\bigg]\,\d s \\
        		&\leq \int_{0}^{t}\E\bigg[\int \big(-c\langle n\rangle^{-2\alpha}+C\vert n\vert^{-2}\big)\,\vert \hat{\omega}_s(n)\vert^2\,\d n\bigg]\,\d s+o(1) \\
        		&= -c\int_{0}^{t}\E\big[\Vert \omega_s\Vert_{H^{-\alpha}}^2\big]\, \d s
        		+ C\int_{0}^{t}\E\big[\Vert \omega_s\Vert_{\dot{H}^{-1}}^2\big]\, \d s+o(1).
        	\end{split}
        \end{equation}
        Integrating over $[0,t]\times \Omega$ in \eqref{Ito-formula} and letting $\delta\rightarrow0$, using the bounds \eqref{I_1} with $\epsilon=c/2$, \eqref{I_2} and \eqref{J}, we obtain
        \begin{equation*}
        	\mathbb{E}\big[\Vert\omega_t\Vert_{\dot{H}^{-1}}^2\big]\leq \Vert\omega_0\Vert_{\dot{H}^{-1}}^2
        	-\frac{c}{2}\int_{0}^{t}\mathbb{E}\left[\Vert\omega_s\Vert_{H^{-\alpha}}^2\right]\mathrm{d}s
        	+C \int_{0}^{t}\mathbb{E}\big[\Vert\omega_s\Vert_{\dot{H}^{-1}}^2\big]\mathrm{d}s.
        \end{equation*}
    	Since $\omega_0=\omega_0^1-\omega_0^2=0$, by Gr\"onwall inequality, we get
    	\begin{equation*}
    		\sup\limits_{t\in [0,T]}\E\big[\Vert\omega_t\Vert_{\dot{H}^{-1}}\big]
    		=0,
    	\end{equation*}
    	which implies $\omega^1=\omega^2$ and completes the proof of Theorem \ref{thm}.
    	
    	Now we turn to proving Lemma \ref{lem-3}. Recall the expressions of $D^2G$ and $D^2G^\delta$; we have
    	\begin{equation*}
    		(D^2G^\delta-D^2G)(x)
    		=D^2G(x)\big(e^{-\frac{\delta\vert x\vert^2}{4}}-e^{-\frac{\vert x\vert^2}{4\delta}}-1\big)
    		+ \frac{x\otimes x}{4\pi\vert x\vert^2} \big(\delta e^{-\frac{\delta\vert x\vert^2}{4}}- \delta^{-1} e^{-\frac{\vert x\vert^2}{4\delta}} \big).
    	\end{equation*}
        Substituting it into $R_2$, by the structure of the covariance $Q$ in Lemma \ref{Q-structure}, we have
        \begin{equation*}
        	\begin{split}
        		R_2(x)&=\mathrm{tr}\big[(Q(0)-Q(x))D^2(G^\delta-G)(x)\big]\varphi(x) \\
        		&=\mathrm{tr}\big[(Q(0)-Q(x))D^2G(x)\big]\varphi(x)\big(e^{-\frac{\delta\vert x\vert^2}{4}}-e^{-\frac{\vert x\vert^2}{4\delta}}-1\big) \\
        		&\quad+	\frac{1}{4\pi}\mathrm{tr}\Big[\Big(B_L(0)I_2-B_L(\vert x\vert)\frac{x\otimes x}{\vert x\vert^2}-B_N(\vert x\vert)\Big(I_2-\frac{x\otimes x}{\vert x\vert^2}\Big)\Big)\frac{x\otimes x}{\vert x\vert^2}\,\Big] \\
          &\qquad \times \varphi(x) \big(\delta e^{-\frac{\delta\vert x\vert^2}{4}}-\delta^{-1}e^{-\frac{\vert x\vert^2}{4\delta}}\big) \\
        		&=A(x)\big(e^{-\frac{\delta\vert x\vert^2}{4}}-e^{-\frac{\vert x\vert^2}{4\delta}}-1\big)
        		+\frac{1}{4\pi}\big(B_L(0)-B_L(\vert x\vert)\big)\varphi(x)\big(\delta e^{-\frac{\delta\vert x\vert^2}{4}}-\delta^{-1}e^{-\frac{\vert x\vert^2}{4\delta}}\big) \\
        		&=:R_{21}(x)+R_{22}(x).
        	\end{split}
        \end{equation*}

        For the term $R_{21}$, setting $h(x)=e^{-\frac{\vert x \vert^2}{4}}$, we have $R_{21}(x)=A(x)\big(h(\delta^{1/2}x)-1-h(\delta^{-1/2}x)\big)$, therefore
        \begin{equation*}
        	\begin{split}
        		\vert \widehat{R_{21}}(n)\vert
        		&=\big\vert \big(\widehat{A}*\hat{h}_{\delta^{1/2}}-\widehat{A}-\widehat{A}*\hat{h}_{\delta^{-1/2}}\big)(n) \big\vert\\
        		&\leq \big\vert \big(\widehat{A}*\hat{h}_{\delta^{1/2}}-\widehat{A}\,\big)(n) \big\vert
        		+\big\vert \widehat{A}*\hat{h}_{\delta^{-1/2}}(n)\big\vert.
        	\end{split}
        \end{equation*}
        where $\hat{h}_{\delta^{1/2}}(\cdot)=\delta^{-1}\hat{h}(\delta^{-1/2}\cdot)$ and $\hat{h}_{\delta^{-1/2}}(\cdot)=\delta\hat{h}(\delta^{1/2}\cdot)$, in accordance with the notation in Lemma \ref{converge}. Note that $\int \hat{h}(n)\, \d n=h(0)=1$ and $\hat{h}(n)=4\pi e^{-4\pi^2\vert n\vert^2}$ for all $n\in \R^2$. On the one hand, observe that the support of $A$ is compact, and $\vert A(x)\vert\lesssim \vert x\vert^{2\alpha-2}$ near the origin, hence $A$ is integrable on $\R^2$; this implies that $\widehat{A}$ is continuous and, by the well known Riemann-Lebesgue lemma, it vanishes at infinity. As a result, we are now in the position to apply Lemma \ref{converge}, where $\widehat{A}$ and $\hat{h}$ correspond to $f$ and $\rho$, respectively. Hence we obtain that for every $n\in\R^2$, as $\delta\rightarrow 0$,
        \begin{equation*}
        	\widehat{A}*\hat{h}_{\delta^{1/2}}(n)-\widehat{A}(n)\rightarrow 0 \quad\text{and}\quad \widehat{A}*\hat{h}_{\delta^{-1/2}}(n)\rightarrow 0.
        \end{equation*}
        On the other hand, following the proof of the first estimate in \eqref{A,R_3} (see \cite[Lemma 4.3]{CogMau}), one can also show that $|\widehat{A}(n)| \lesssim \langle n\rangle^{-2\alpha}$; Lemma \ref{maximal} implies $M\widehat{A}(n) \lesssim \langle n\rangle^{-2\alpha}$. Moreover, note that
          $$\big\| \hat{h}_{\delta^{1/2}} \big\|_{L^1} = \big\| \hat{h}_{\delta^{-1/2}} \big\|_{L^1} = \|\hat h\|_{L^1} =1$$
        is independent of $\delta \in (0,1)$, thus applying Lemma \ref{dominate1} with $\psi=\hat{h}_{\delta^{1/2}}$ and $\psi=\hat{h}_{\delta^{-1/2}}$, respectively, we deduce that there exists a constant $C>0$ such that for every $0<\delta<1\text{ and } n\in\R^2$,
        \begin{equation*}
        	\big\vert\widehat{A}*\hat{h}_{\delta^{1/2}} (n)\big\vert \leq C\langle n \rangle^{-2\alpha}\quad \text{and}\quad \big\vert\widehat{A}*\hat{h}_{\delta^{-1/2}} (n)\big\vert \leq C\langle n \rangle^{-2\alpha}.
        \end{equation*}
        Hence we have $\vert\widehat{R_{21}}(n)\vert\lesssim\langle n\rangle^{-2\alpha}$ and $\vert\widehat{R_{21}}(n)\vert\rightarrow 0 \text{ as }\delta\rightarrow0$ for all $n\in \R^2$.

        Now let us turn to coping with the term $R_{22}$, which can be split as follows:
        \begin{equation*}
        	\begin{split}
        		R_{22}
        		&=\frac{1}{4\pi\vert x \vert^2}\big(B_L(0)-B_L(\vert x\vert)\big)\varphi(x)\,
        		\big(\delta\vert x\vert^2e^{-\frac{\delta\vert x\vert^2}{4}}- \delta^{-1}\vert x\vert^2e^{-\frac{\vert x\vert^2}{4\delta}}\big)   \\
        		&=:\bar{A}(x)\, (H(\delta^{1/2}x)-H(\delta^{-1/2}x)),
        	\end{split}
        \end{equation*}
        where $H(x)=\vert x\vert^2e^{-\frac{\vert x\vert^2}{4}}$. Then we have
        \begin{equation*}
        		\begin{split}
        		\vert \widehat{R_{22}}(n)\vert
        		&=\big\vert \big(\mathcal{F}(\bar{A})*\widehat{H}_{\delta^{1/2}}-\mathcal{F}(\bar{A})*\widehat{H}_{\delta^{-1/2}}\big)(n) \big\vert\\
        		&\leq \big\vert \big(\mathcal{F}(\bar{A})*\widehat{H}_{\delta^{1/2}}\big)(n) \big\vert
        		+\big\vert \mathcal{F}(\bar{A})*\widehat{H}_{\delta^{-1/2}}(n)\big\vert.
        	\end{split}
        \end{equation*}
        Note that $\widehat{H}$ is a Schwartz function and $\int\widehat{H}(n)\,\d n=H(0)=0$. Remark \ref{remark} implies that the $L^1$-norm of $S\big(\widehat{H}_{\delta^{1/2}} \big)$ and $S\big(\widehat{H}_{\delta^{-1/2}} \big)$  is a finite constant independent of $\delta$.
        Next, by Lemma \ref{Q-structure}, $\bar{A}$ enjoys the same properties as $A$, and thus its Fourier transform $\mathcal{F}(\bar{A})$ is continuous on $\R^2$ and vanishes at infinity; moreover, repeating the proof of \cite[Lemma 4.3]{CogMau}, it is easy to verify that $\mathcal{F}(\bar{A})$ has similar pointwise upper bound, namely $\vert \mathcal{F}({\bar{A}})(n)\vert \lesssim \langle n\rangle ^{-2\alpha}$ for all $n\in\mathbb{R}^2$.
        Thus we can handle this term in exactly the same way as term $R_{21}$, with $\bar{A}$ and $H$ corresponding to $A$ and $h$, respectively, so $\vert\widehat{R_{22}}(n)\vert\lesssim\langle n\rangle^{-2\alpha}$ and $\vert\widehat{R_{22}}(n)\vert\rightarrow 0 \text{ as }\delta\rightarrow0$ for all $n\in \R^2$. Combining the estimates of $R_{21}$ and $R_{22}$, Lemma \ref{lem-3} is thus proved and we obtain the desired assertion.
    \end{proof}

    \begin{rmk}
    	The key point is that we just use the $H^{-\alpha}$ bound of $\omega$ to control the remainder term $R_2$, rather than exploiting the $L^p$ bound and Young inequality as in \cite[Section 7]{CogMau}. The latter is quite straightforward, but requires the embedding $L^p\hookrightarrow H^{-\alpha}$, which leads to the restriction of the range of $p$. In fact,
    	the condition $\alpha>2/p-1$ is equivalent to $p>2/(1+\alpha)$, hence due to \cite[Remark 2.13]{CogMau}, the class $L^2\big([0,T]\times \Omega;H^{-\alpha}(\R^2)\big)$ is already redundant in the statement of uniqueness part of Theorem 2.12 there.
    \end{rmk}

    \begin{rmk}
    	In the recent work \cite{JiaoLuo24}, we considered stochastic mSQG (modified Surface Quasi-Geostrophic) equations with Kraichnan noise, where the constraint similar to $\alpha>2/p-1$ still exists, see \cite[Theorem 1.4]{JiaoLuo24}. The tricks in this note do not work in that case, since the approximate kernel $G_\beta^\delta$ therein is constructed by fractional heat kernel, for which the corresponding approximate kernel in physical space does not have an explicit formula. We also mention that Bagnara et al. \cite{BGM24} studied the same equation but they assumed the initial data belong to $L^1\cap L^p$ with $p\ge 2$.
    \end{rmk}

\medskip
		
\noindent \textbf{Acknowledgements.}
The second author is grateful to the financial supports of the National Key R\&D Program of China (No. 2020YFA0712700), the National Natural Science Foundation of China (Nos. 11931004, 12090010, 12090014), and the Youth Innovation Promotion Association, CAS (Y2021002).

\end{document}